\newtheorem{df}{Definition}
\newtheorem{lem}{Lemma}
\newtheorem{prop}{Proposition}
\newtheorem{cor}{Corollary}
\newtheorem{claim}{Claim}
\newtheorem{thm}{Theorem}
\begin{document}

\newcommand{\sinc}{\text{sinc}}

\title[Regular Cardinal Interpolators]{On the convergence of regular families of cardinal interpolators}

\author{Jeff Ledford}


\begin{abstract}
In this note a general way to develop a cardinal interpolant for $l^2$-data on the integer lattice $\mathbb{Z}^n$ is shown.  Further, a parameter is introduced which allows one to recover the original Paley-Weiner function from which the data came.

\smallskip
\noindent \textsc{Keywords.} \it{cardinal interpolation, multiquadric, multivariate interpolation, Paley-Wiener functions}
\end{abstract}

\maketitle

\section{Introduction}

In this article we prove a result similar to the ones found in \cite{baxter}, \cite{splines}, \cite{RS 1}, \cite{RS 2}, and \cite{RS}.  These results may be thought of as outgrowths of Schoenberg's work on splines (see for instance \cite{Schoenberg}).  The general set up is as follows.  Suppose that we have data $\{f(j)\}$ from some class, we wish to interpolate this data with an interpolant that depends on some parameter.  In each case what is investigated is what happens as the parameter approaches a limiting value.  The papers mentioned above all fix a particular type of what it called in this article a cardinal interpolator.  The goal of this paper is to unify the procedures in those and similar papers in order to provide conditions on an interpolator which allows one to recover Paley-Wiener functions from their samples on the integer lattice by allowing the parameter to approach a limiting value.  In doing so, more examples are uncovered.

This article is organized as follows.  In the next, section several preliminary definitions are laid out.  We define what is meant by cardinal interpolator in Section 3 and introduce the corresponding fundamental function.  Section 4 deals with properties of the fundamental function, while in Section 5 regular families are introduced and contains the proof of the main theorem.  The final section consists of three examples of regular families of cardinal interpolators.

Finally, our calculations will often involve a positive constant $C$, whose value depends on its occurrence but not on various parameters unless otherwise stated.

\section{Definitions and Basic Facts}
In this section, we collect several definitions and general facts that will be used in what follows.  Our methods require the use of the Fourier transform, for which we adopt the following convention.

\begin{df} If $f\in L^1(\mathbb{R}^n)$ we define its \emph{Fourier transform}, denoted $\hat{f}(\xi)$ or $\mathcal{F}(f)(\xi)$, to be:
\begin{equation}\label{FT definition}
\mathcal{F}(f)(\xi)=\hat{f}(\xi)=(2\pi)^{-n/2}\int_{\mathbb{R}^n}f(x)e^{-ix\cdot\xi}dx,
\end{equation}
where $x\cdot\xi=\sum_{j=1}^{n}x_j\xi_j$.
\end{df}

We will encounter examples which are not integrable in this case we adopt the convention of \cite{wendland}, particularly Definition 8.9 on page 103,  and include it here for convenience changing the name a little to avoid confusion.

\begin{df}  For $m\in\mathbb{Z}^+$, the set of all functions $\gamma\in\mathcal S$ that satisfy $\gamma(\xi)=O(\|\xi\|^m)$ for $\|\xi\|\to0$ will be denoted by $\mathcal{S}_{m}$.\\
Here $\mathcal S$ is the Schwartz space, and $\|\cdot\|$ is the Euclidean norm on $\mathbb{R}^n$.
\end{df}

\begin{df} Suppose that ${\phi}:\mathbb{R}^n\to\mathbb{C}$ is continuous and slowly increasing.  A measurable function $\hat{{\phi}}\in L^{2}_{loc}(\mathbb{R}^n \setminus \{0\})$ is called the \emph{specialized Fourier transform} of ${\phi}$ if there exists an integer $m\in\mathbb{Z}^+$ such that
\[
\int_{\mathbb{R}^n}{\phi}(x)\hat{\gamma}(x)dx = \int_{\mathbb{R}^n}\hat{\phi}(\xi)\gamma(\xi)d\xi
\]
is satisfied for all $\gamma\in\mathcal{S}_{2m}$.  The least such $m$ is called the order of $\hat{\phi}$.
\end{df}

Before we continue, a few comments are needed.  In \cite{wendland} this is called the generalized Fourier transform.  However, there is already a customary definition of the generalized Fourier transform, which uses the entire Schwartz class as test functions.  To avoid this confusion, I have changed the name.  If a function $\phi(x)\in L^1(\mathbb{R}^n)$ then its specialized and classical Fourier transform coincide, and the order is 0.  The same is true for $\phi(x)\in L^2(\mathbb{R}^n)$.  Finally, the specialized and the generalized (defined in the usual way) Fourier transform coincide on the set $\mathcal{S}_{2m}$.

We turn to constraints on the data that we will use.  It is not necessary to start out assuming that the data comes from any particular function, however it serves our purposes later.  Thus throughout the paper we will sample functions exclusively from the Paley-Wiener space $PW_\pi$.
\begin{df}  We define the \emph{Paley Wiener space}, denoted $PW_\pi$, as 
\begin{equation}\label{PW definition}
PW_\pi = \{f\in L^2(\mathbb{R}^n) : \text{ supp}(\hat{f})\subseteq [-\pi,\pi]^n \}.
\end{equation}
We note that the norm on this space is given by
\[
\| f \|_{PW_\pi}=\| f \|_{L^2(\mathbb{R}^n)}.
\]
\end{df}
\begin{lem}\label{sq sum}
Suppose that $f\in PW_\pi$, then $\{f(j)\}\in l^2(\mathbb{Z}^n)$.
\end{lem}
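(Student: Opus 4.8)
The plan is to recognize the samples $\{f(j)\}$ as, up to a reflection in the index, the Fourier coefficients of $\hat f$ viewed as a function on the cube $Q=[-\pi,\pi]^n$, and then to invoke Bessel's inequality. First I would observe that since $f\in PW_\pi$, its Fourier transform $\hat f$ lies in $L^2(\mathbb{R}^n)$ and is supported in $Q$; because $Q$ has finite measure, the Cauchy--Schwarz inequality applied on $Q$ forces $\hat f\in L^1(\mathbb{R}^n)$ as well. Consequently Fourier inversion applies and $f$ agrees almost everywhere with the continuous function
\begin{equation}
f(x) = (2\pi)^{-n/2}\int_{Q}\hat f(\xi)\,e^{ix\cdot\xi}\,d\xi,
\end{equation}
so that the point evaluations $f(j)$, $j\in\mathbb{Z}^n$, are well defined.

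Second, I would set $x=j$ in the inversion formula and compare the resulting integral with the inner products against the exponential system $\{e_j\}_{j\in\mathbb{Z}^n}$, where $e_j(\xi)=(2\pi)^{-n/2}e^{ij\cdot\xi}$. This family is orthonormal in $L^2(Q)$, and a direct computation shows $f(j)=\langle \hat f, e_{-j}\rangle_{L^2(Q)}$. Thus each sample value is exactly a Fourier coefficient of $\hat f$ relative to this orthonormal system on the cube.

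Third, Bessel's inequality for $\{e_j\}$ gives
\begin{equation}
\sum_{j\in\mathbb{Z}^n}|f(j)|^2 = \sum_{j\in\mathbb{Z}^n}\bigl|\langle \hat f, e_{-j}\rangle_{L^2(Q)}\bigr|^2 \le \|\hat f\|_{L^2(Q)}^2 = \|\hat f\|_{L^2(\mathbb{R}^n)}^2 = \|f\|_{PW_\pi}^2,
\end{equation}
where the last two equalities use that $\hat f$ is supported in $Q$ together with Plancherel's theorem and the definition of the $PW_\pi$ norm. Since $f\in PW_\pi$ gives $\|f\|_{PW_\pi}<\infty$, we conclude $\{f(j)\}\in l^2(\mathbb{Z}^n)$, which is the claim. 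In fact $\{e_j\}$ is a complete orthonormal basis of $L^2(Q)$, so Bessel's inequality is an equality and one even obtains $\|\{f(j)\}\|_{l^2(\mathbb{Z}^n)}=\|f\|_{PW_\pi}$, though only the inequality is needed here.

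The step requiring the most care is the passage from the abstract statement $\hat f\in L^2$ to the pointwise inversion formula: one must legitimately replace $f$ by its continuous representative so that the symbols $f(j)$ denote genuine function values rather than artifacts of an $L^2$ equivalence class. This is precisely where the integrability $\hat f\in L^1(Q)$ is used. Everything after that is the bookkeeping identification of samples with Fourier coefficients, after which Bessel (or, more strongly, Parseval) closes the argument immediately.
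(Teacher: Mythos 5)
Your argument is correct and is exactly the expansion of the paper's one-line proof, which simply cites Plancherel's theorem: identifying the samples $f(j)$ with the Fourier coefficients of $\hat f$ on $[-\pi,\pi]^n$ and applying Bessel/Parseval is precisely the form of Plancherel being invoked, and indeed the paper later uses the resulting identity $(2\pi)^{n/2}\hat f(\xi)=\sum_{j}f(j)e^{-ij\cdot\xi}$ in the proof of the main theorem. Your added care about passing to the continuous representative via $\hat f\in L^1(Q)$ is a worthwhile detail the paper leaves implicit.
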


\begin{proof}
This is a direct consequence of Plancherel's theorem.
\end{proof}

\section{Cardinal Interpolators}
Here we set out hypotheses on a function $\phi(x)$ which will be used throughout the rest of the paper.
\begin{df}  We say that a function $\phi(x)$ is a \emph{cardinal interpolator} if it satisfies the following conditions:
\begin{enumerate}
\item[(H1)]$\phi(x)$ is a real valued slowly increasing function on $\mathbb{R}^n$,
\item[(H2)] $\hat{{\phi}}(\xi) \geq 0$ and $\hat{{\phi}}(\xi)\geq \delta >0$ in $[-\pi,\pi]^n$,
\item[(H3)] $\hat{{\phi}}(\xi)\in C^{n+1}(\mathbb{R}^n\setminus\{0\})$,
\item[(H4)] There exists $\epsilon >0$ such that if $|\alpha|\leq n+1$, \\ $D^{\alpha}\hat\phi(\xi)=O(\| \xi \|^{-(n+\epsilon)})$ as $\|\xi\|\to\infty$,
\item[(H5)] For any multi-index $\alpha$, with $|\alpha|\leq n+1$, 
\[
\dfrac{\displaystyle\prod_{j=1}^{|\alpha|}D^{\alpha_j}\hat\phi(\xi)  }     {[\hat\phi(\xi)]^{|\alpha|+1}} \in L^{\infty}([-\pi,\pi]^n) \hspace{.25 in}\text{where}\hspace{.25in} \sum_{j=1}^{|\alpha|}\alpha_j = \alpha.
\] 
\end{enumerate}

\end{df}

For a cardinal interpolant $\phi(x)$, we form the corresponding so-called \emph{fundamental function} of interpolation, denoted $L_\phi(x)$, by its Fourier transform.
\begin{equation}\label{fundamental function}
\hat{L}_\phi(\xi)=(2\pi)^{-n/2}\hat{\phi}(\xi)\left[\sum_{j\in\mathbb{Z}^n}\hat{\phi}(\xi-2\pi j)\right]^{-1}
\end{equation}

\section{Properties of the Fundamental Function}
The main goal of this section is to establish the integrability $L_\phi(x)$.  This is done through several lemmas.  First we rewrite the transform of the fundamental function as
\[
(2\pi)^{n/2}\hat{L}_\phi(\xi) = \dfrac{\hat{\phi}(\xi)}{\hat{\phi}(x)+u(\xi)},
\]
where $u(\xi)=\sum_{j\neq 0}\hat{\phi}(\xi-2\pi j)$.  Condition (H4) implies that for all $|\alpha|\leq n+1$, $D^\alpha u$ is well defined on $[-\pi,\pi]^n$.  Notice also that (H2) provides the transparent bound $|\hat{L}_\phi(\xi)| \leq 1$.  Hypotheses (H4) and (H5) will allow us to control the growth of the derivative of the above expression.  Before stating our first lemma we adopt the notation $\mathbb{N}_0= \mathbb{N}\cup\{0\}$, and follow the convention $D^0 f = f$. 

\begin{lem} If $\alpha\in \mathbb{N}_0$ is a multi-index, with $|\alpha|\geq 1$, we have
\begin{equation}\label{L deriv}
D^{\alpha}\hat{L}_\phi (\xi) = \left[ \hat{\phi}(\xi)+u{\xi} \right]^{-|\alpha|-1}\sum_{a\in A_\alpha,B\subseteq S_\alpha} C_{a,B}\prod_{\gamma\in B}D^{a_\gamma}\hat{\phi}(\xi)\prod_{\delta\notin B}D^{a_\delta}u(\xi),
\end{equation}
where $A_\alpha = \left\{(a_1,a_2,\dots,a_{|\alpha|+1}): a_j\in \mathbb{N}^{n}_{0}, a_1+\cdots+a_{|\alpha|+1}=\alpha  \right\}$, $S_\alpha = \{1,2,\dots,|\alpha|+1 \}$, and $C_{a,B}$ are constants.  Furthermore, the coefficients corresponding to $B=\emptyset$ and $B=S_\alpha$ are $0$.
\end{lem}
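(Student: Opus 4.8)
The plan is to prove \eqref{L deriv} by induction on $|\alpha|$, regarding it as an identity on $[-\pi,\pi]^n$; there (H2) gives $\hat\phi(\xi)+u(\xi)\geq\delta>0$ while (H3) and (H4) make $\hat\phi$ and $u$ of class $C^{n+1}$, so every derivative below exists and the quotient is differentiable. Write $v=\hat\phi+u$ for the denominator and $e_k$ for the $k$-th unit multi-index. For the base case $|\alpha|=1$, the quotient rule applied to $(2\pi)^{n/2}\hat L_\phi=\hat\phi/v$ gives a numerator $(D^{e_k}\hat\phi)v-\hat\phi(D^{e_k}v)$ in which the two all-$\hat\phi$ terms cancel, leaving
\[
D^{e_k}\hat L_\phi=(2\pi)^{-n/2}\,v^{-2}\bigl(u\,D^{e_k}\hat\phi-\hat\phi\,D^{e_k}u\bigr).
\]
This has the asserted shape with $S_\alpha=\{1,2\}$, the two terms being $a=(e_k,0),\,B=\{1\}$ and $a=(0,e_k),\,B=\{1\}$; the cancellation is precisely the absence of the $B=S_\alpha$ term, and no $B=\emptyset$ term ever arises since the numerator of $\hat L_\phi$ carries a factor of $\hat\phi$.

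For the inductive step, assume \eqref{L deriv} for some $\alpha$ and apply $D^{e_l}$. The product rule yields three kinds of contributions. Differentiating the prefactor $v^{-|\alpha|-1}$ produces $-(|\alpha|+1)v^{-|\alpha|-2}(D^{e_l}\hat\phi+D^{e_l}u)$ times the existing product; here the denominator exponent is already $|\alpha|+2=|\alpha+e_l|+1$, and the new factor $D^{e_l}\hat\phi$ (respectively $D^{e_l}u$) is recorded as an extra slot placed inside $B$ (respectively outside $B$). Differentiating one of the factors $D^{a_\gamma}\hat\phi$ or $D^{a_\delta}u$ raises its order but leaves the exponent at $|\alpha|+1$; to reach the exponent $|\alpha+e_l|+1$ required by the target formula I multiply numerator and denominator by $v=\hat\phi+u$, which splits the term into two and appends either a trivial factor $D^0\hat\phi=\hat\phi$ (a new slot in $B$) or $D^0u=u$ (a new slot outside $B$). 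In all cases the factors now have total order $\alpha+e_l$ and number $|\alpha+e_l|+1$, so collecting like terms and relabeling yields \eqref{L deriv} for $\alpha+e_l$ with suitably redefined constants.

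To see that the extreme coefficients remain zero, note that each of the three operations preserves the property that $B$ is a proper nonempty subset of $S_\alpha$: raising the order of an existing factor does not change its type, while both the prefactor hit and the normalization by $v$ append, across the two terms they generate, one slot of each type. A term with $\emptyset\subsetneq B\subsetneq S_\alpha$ therefore retains at least one $\hat\phi$-factor and at least one $u$-factor after differentiation, so neither a $B=\emptyset$ nor a $B=S_{\alpha+e_l}$ term is ever produced; with the base case this closes the induction.

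The main obstacle is the bookkeeping in the inductive step: the need to match denominator exponents forces the normalization by $v$, and one must check that reindexing the composition data $(a,B)$ and merging the resulting terms is consistent, so that the constants $C_{a,B}$ are well defined. As an independent check on the vanishing of the extreme coefficients, one may specialize the identity to $u\equiv0$, which makes $\hat L_\phi$ constant and so annihilates the (all-$\hat\phi$) $B=S_\alpha$ terms, and to $\hat\phi\equiv0$, which makes $\hat L_\phi\equiv0$ and so annihilates the (all-$u$) $B=\emptyset$ terms.
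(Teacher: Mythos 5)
Your proof is correct and follows essentially the same route as the paper's: induction on $|\alpha|$, with the quotient rule giving the base case (and the cancellation of the all-$\hat\phi$ terms) and the Leibniz rule driving the inductive step. You simply supply the bookkeeping — the normalization by $v=\hat\phi+u$ to match denominator exponents and the invariant that $B$ stays a proper nonempty subset — that the paper compresses into ``applying the Leibniz rule and collecting terms yields the result.''
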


\begin{proof}
We will induct on $|\alpha|$.  If $|\alpha|=1$, we use the quotient rule to see that
\[
D^{\alpha}\hat{L}_\phi (\xi)= (2\pi)^{-n/2}\dfrac{u(\xi)D^\alpha\hat\phi(\xi)- \phi(\xi)D^\alpha u(\xi)}{(\hat\phi(\xi)+u(\xi))^2}.
\]
This is the desired form, and since all of the terms on top are mixed products of $\hat\phi(\xi)$ and $u(\xi)$ the coefficient condition holds as well.  Now if we assume that \eqref{L deriv} holds for all $|\alpha|\leq n$, then applying the Leibniz rule and collecting terms yields the result.
\end{proof}

We have the following result.
\begin{cor}\label{cor 1}
If $|\alpha|\leq n+1$, then $D^{\alpha}\hat{L}_{\phi}\in L^{\infty}([-\pi,\pi]^n)$.
\end{cor}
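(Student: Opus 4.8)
The plan is to read off the bound term-by-term from the explicit formula \eqref{L deriv} of the preceding lemma, treating the case $|\alpha|=0$ separately via the already-noted estimate $|\hat L_\phi(\xi)|\le 1$. So I would fix $\alpha$ with $1\le|\alpha|\le n+1$; since \eqref{L deriv} is a finite sum, it suffices to bound a single summand
\[
C_{a,B}\,[\hat\phi(\xi)+u(\xi)]^{-|\alpha|-1}\prod_{\gamma\in B}D^{a_\gamma}\hat\phi(\xi)\prod_{\delta\notin B}D^{a_\delta}u(\xi)
\]
on the cube $[-\pi,\pi]^n$, where $B\subseteq S_\alpha$ and, by the coefficient condition of the lemma, we may assume $B\neq\emptyset$ and $B\neq S_\alpha$.

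First I would dispose of the $u$-factors. Writing $u(\xi)=\sum_{j\neq 0}\hat\phi(\xi-2\pi j)$ and differentiating termwise (justified, as already remarked, by (H4)), for $\xi\in[-\pi,\pi]^n$ and $j\neq 0$ one has $\|\xi-2\pi j\|\ge c\|j\|$, so (H4) yields $|D^{a_\delta}\hat\phi(\xi-2\pi j)|\le C\|j\|^{-(n+\epsilon)}$; since $\sum_{j\neq 0}\|j\|^{-(n+\epsilon)}<\infty$, the series and its derivatives converge uniformly, and each $D^{a_\delta}u$ is bounded on the cube. Hence $\prod_{\delta\notin B}|D^{a_\delta}u(\xi)|\le C$.

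Next I would control the denominator and the $\hat\phi$-factors together using positivity and (H5). Since $\hat\phi\ge 0$ everywhere, $u\ge 0$, and $\hat\phi\ge\delta$ on the cube by (H2), I split the exponent as $|\alpha|+1=(|B|+1)+(|\alpha|-|B|)$ — here $|\alpha|-|B|\ge 0$ is exactly where $B\neq S_\alpha$ is used — and estimate
\[
[\hat\phi+u]^{-|\alpha|-1}\le \delta^{-(|\alpha|-|B|)}\,[\hat\phi]^{-|B|-1}.
\]
It then remains to bound $\prod_{\gamma\in B}|D^{a_\gamma}\hat\phi|\,[\hat\phi]^{-|B|-1}$, which is what (H5) supplies — but only after a bookkeeping step, since (H5) is stated for a product of exactly $|\beta|$ factors over $[\hat\phi]^{|\beta|+1}$, whereas here I have only $|B|$ factors whose orders sum to $\beta:=\sum_{\gamma\in B}a_\gamma$, with $|\beta|\ge|B|$. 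I would reconcile this by padding: append $|\beta|-|B|$ trivial factors $D^0\hat\phi=\hat\phi$, so that the numerator becomes a product of $|\beta|$ derivative factors summing to $\beta$, apply (H5) with this $\beta$, and cancel the extra $[\hat\phi]^{|\beta|-|B|}$ to recover the power $|B|+1$.

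I expect the main obstacle to be precisely this matching of the Leibniz-type terms to the rigid shape of (H5): the number of $\hat\phi$-factors produced by \eqref{L deriv} is $|B|$, not the total differentiation order, so (H5) cannot be invoked verbatim, and the padding argument — together with the vanishing of the $B=S_\alpha$ coefficient, which alone would produce a term with too many $\hat\phi$-factors and no compensating power of $\delta$ — is what makes the estimate close. Everything else, namely the termwise differentiation of $u$, the positivity bounds, and summing the finitely many terms, is routine.
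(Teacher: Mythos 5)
Your argument is correct and is exactly the paper's approach: the paper's entire proof is the single line ``Combine (H5) with \eqref{L deriv},'' and your proposal simply supplies the details (the termwise bound on $D^{a_\delta}u$ via (H4), the splitting of the denominator power using (H2) and $u\ge 0$, and the padding step needed to fit the rigid shape of (H5)). The one small inaccuracy is the claim $|\beta|\ge|B|$, which fails when some $a_\gamma=0$ for $\gamma\in B$; but such factors are literally $\hat\phi$ and cancel directly against the denominator, reducing to the case where every remaining factor has $|a_\gamma|\ge 1$, so your padding argument still closes.
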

\begin{proof}
Combine (H5) with \eqref{L deriv}.
\end{proof}

In order to determine the growth of $\hat{L}_{\phi}(\xi)$ away from the origin, we examine the function
\[
P(\xi)=\dfrac{1}{\hat\phi(\xi)+u(\xi)}.
\]

\begin{lem}
For $\xi\in[-\pi,\pi]^n$ and $\alpha$ a multi-index, we have
\begin{equation}\label{P deriv}
D^\alpha P (\xi) =\left[  \hat\phi(\xi)+u(\xi) \right]^{-|\alpha|-1} \sum_{a\in A_\alpha}C_a \prod_{j=1}^{|\alpha|}D^{a_j}(\hat\phi(\xi)+u(\xi)),
\end{equation}
where $A_\alpha=\left\{(a_1,a_2,\dots,a_{|\alpha|}): a_j\in \mathbb{N}_0^n, a_1+\cdots+a_{|\alpha|}=\alpha    \right\}$.
\end{lem}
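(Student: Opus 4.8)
The plan is to prove \eqref{P deriv} by induction on $|\alpha|$, exactly paralleling the proof of the preceding lemma but now for the reciprocal $P=(\hat\phi+u)^{-1}$ rather than a quotient. For brevity I would write $g(\xi)=\hat\phi(\xi)+u(\xi)$, so that $P=g^{-1}$ and the claimed identity reads $D^\alpha P = g^{-|\alpha|-1}\sum_{a\in A_\alpha}C_a\prod_{j=1}^{|\alpha|}D^{a_j}g$. The base cases are immediate: for $|\alpha|=0$ both sides equal $g^{-1}$ (empty product, $C=1$), and for $|\alpha|=1$ the chain rule gives $D^\alpha(g^{-1})=-g^{-2}D^\alpha g$, which is of the stated form with the single tuple $a=(\alpha)$ and constant $C_a=-1$.

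For the inductive step, I would assume \eqref{P deriv} holds for a fixed $\alpha$ and differentiate both sides in a coordinate direction $e_k$, applying the product rule to the right-hand side. Two families of terms arise. When $\partial_k$ falls on the prefactor $g^{-|\alpha|-1}$ I get $-(|\alpha|+1)g^{-|\alpha|-2}(D^{e_k}g)\prod_{j=1}^{|\alpha|}D^{a_j}g$; this is already a product of $|\alpha|+1=|\alpha+e_k|$ derivatives of $g$ whose orders sum to $\alpha+e_k$, divided by $g^{|\alpha+e_k|+1}$, so it has precisely the target shape. When $\partial_k$ instead falls on one of the factors $D^{a_i}g$, I obtain terms of the form $g^{-|\alpha|-1}(D^{a_i+e_k}g)\prod_{j\neq i}D^{a_j}g$, which carry the wrong power of $g$ and only $|\alpha|$ factors.

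The one point that needs care—and the step I regard as the main obstacle—is reconciling this second family with the asserted normalization. The resolution is to insert a trivial factor $g=D^0 g$, writing $g^{-|\alpha|-1}=g^{-|\alpha|-2}\cdot D^0 g$. Because the tuples defining $A_{\alpha+e_k}$ are permitted to have zero entries, this device legitimately converts each such term into a product of $|\alpha|+1$ derivatives of $g$ whose orders sum to $\alpha+e_k$, over $g^{|\alpha+e_k|+1}$. Thus every term produced by differentiation is indexed by some $a\in A_{\alpha+e_k}$, and collecting like terms (absorbing all numerical factors into new constants $C_a$) yields \eqref{P deriv} for $\alpha+e_k$. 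Since only the existence of the constants is claimed, no bookkeeping of their exact values is required, which keeps the induction short.
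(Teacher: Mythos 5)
Your proof is correct and follows essentially the same route as the paper's: induction on $|\alpha|$ with the quotient/product rule and collection of terms. The only difference is that you make explicit the bookkeeping device the paper leaves implicit --- inserting a factor $D^0g=g$ so that terms where the new derivative lands on a $D^{a_i}g$ acquire the right power of $g$ and the right number of factors --- which is exactly the step that makes ``collecting terms'' legitimate.
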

\begin{proof}
We again induct on $|\alpha|$.  If $|\alpha|=1$, we use the quotient rule to obtain
\[
D^\alpha P(\xi) = \dfrac{-D^\alpha (\hat\phi(\xi)+u(\xi))}{(\hat\phi(\xi)+u(\xi))^{2}}.
\]
This is the desired result.  Suppose now that \eqref{P deriv} holds for all $|\alpha|\leq n$, then applying the quotient rule and collecting terms yields the result.
\end{proof}

\begin{cor}\label{cor 2}
For $|\alpha|\leq n+1$, $D^\alpha P(\xi)\in L^{\infty}(\mathbb{R}^n)$.
\end{cor}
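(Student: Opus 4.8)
The plan is to reduce the claim to the fundamental domain $[-\pi,\pi]^n$ and then estimate each summand of \eqref{P deriv} with the hypotheses in hand. First I would observe that the denominator $g(\xi):=\hat\phi(\xi)+u(\xi)=\sum_{j\in\mathbb{Z}^n}\hat\phi(\xi-2\pi j)$ is $2\pi$-periodic in each coordinate, so that $P$ and each of its derivatives are periodic as well. Consequently $\|D^\alpha P\|_{L^\infty(\mathbb{R}^n)}=\|D^\alpha P\|_{L^\infty([-\pi,\pi]^n)}$, where the lattice points $2\pi\mathbb{Z}^n$ (at which $\hat\phi$ may be singular) form a null set and may be ignored, and it suffices to bound $D^\alpha P$ on $[-\pi,\pi]^n$, on which \eqref{P deriv} is available.

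Second, I would record two a priori bounds on $[-\pi,\pi]^n$. By (H2) we have $g(\xi)\geq\hat\phi(\xi)\geq\delta>0$ there, since $u\geq 0$; this bounds the factor $[\,g(\xi)\,]^{-|\alpha|-1}$ appearing in \eqref{P deriv} and also lets me replace $g^{-|\alpha|-1}$ by $\hat\phi^{-|\alpha|-1}$ at will. By (H4) the series defining each $D^{a}u$ converges uniformly on $[-\pi,\pi]^n$ (for $j\neq 0$ every $\xi-2\pi j$ stays bounded away from the origin), so $D^{a}u$ is continuous and hence bounded there for all $|a|\leq n+1$.

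Third, and this is where the real work lies, I would estimate a single summand of \eqref{P deriv}. Writing $D^{a_j}g=D^{a_j}\hat\phi+D^{a_j}u$ and expanding $\prod_{j=1}^{|\alpha|}D^{a_j}g$, a typical term has the shape $\prod_{j\in B}D^{a_j}\hat\phi\prod_{j\notin B}D^{a_j}u$ for some $B\subseteq\{1,\dots,|\alpha|\}$. The $u$-factors are bounded by the preceding step, so the only potential unboundedness comes from the $\hat\phi$-factors near the origin. After discarding the indices $j\in B$ with $a_j=0$ (which merely contribute factors $\hat\phi$), there remain $m$ genuine derivatives whose orders sum to a multi-index $\beta$ with $|\beta|\leq|\alpha|\leq n+1$. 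Using $g^{-|\alpha|-1}\leq\hat\phi^{-|\alpha|-1}$ and redistributing powers of $\hat\phi$, the $\hat\phi$-part of the term factors as
\[
\frac{\prod_{j\in B,\,a_j\neq 0}|D^{a_j}\hat\phi|}{\hat\phi^{\,m+1}}\cdot\frac{1}{\hat\phi^{\,|\alpha|-|B|}}.
\]
The first factor is exactly of the type controlled by (H5)---after padding the list of nonzero $a_j$ with $|\beta|-m$ additional zeros to match its statement---and is therefore bounded; the second is at most $\delta^{-(|\alpha|-|B|)}$ since $|B|\le|\alpha|$ and $\hat\phi\ge\delta$. As \eqref{P deriv} is a finite sum of such terms, $D^\alpha P\in L^\infty([-\pi,\pi]^n)$, and periodicity upgrades this to $L^\infty(\mathbb{R}^n)$.

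The main obstacle is the bookkeeping just sketched: one must verify that the denominator always retains at least one more power of $\hat\phi$ than the number of genuine $\hat\phi$-derivative factors, so that (H5) applies, while the surplus powers remain bounded below by $\delta$. This works precisely because \eqref{P deriv} carries the factor $g^{-|\alpha|-1}$, one power beyond the $|\alpha|$ factors in the numerator, and because any zero-order factor can be absorbed into that surplus.
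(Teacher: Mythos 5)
Your proof is correct and follows the same route as the paper, which simply invokes periodicity of $P$ together with \eqref{P deriv} and (H5); you have filled in the bookkeeping (bounding the $D^{a}u$ factors via (H4), padding with zero-order factors so (H5) applies, and absorbing surplus powers of $\hat\phi$ via (H2)) that the paper leaves implicit.
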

\begin{proof}
Since $P(\xi)$ is periodic, we need only combine (H5) and \eqref{P deriv}.
\end{proof}

\begin{prop}\label{prop 1}
If $\phi$ is a cardinal interpolator and $|\alpha|\leq n+1$, then $D^\alpha \hat{L}_\phi\in L^1(\mathbb{R}^n)$.
\end{prop}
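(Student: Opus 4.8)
The plan is to decompose $\mathbb{R}^n$ into the cube $Q=[-\pi,\pi]^n$ and its complement, treating each region by a different mechanism. On the compact cube, Corollary~\ref{cor 1} already supplies $D^\alpha\hat{L}_\phi\in L^\infty(Q)$ for $|\alpha|\le n+1$; since $Q$ has finite Lebesgue measure, this gives $\int_Q |D^\alpha\hat{L}_\phi|\,d\xi<\infty$ at once, and in particular tames any singularity of $\hat\phi$ at the origin (which lies in the interior of $Q$). Hence the only genuine work concerns the tail $\mathbb{R}^n\setminus Q$, where the decay hypothesis (H4) must be exploited.

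On $\mathbb{R}^n\setminus Q$ I would write $\hat{L}_\phi=(2\pi)^{-n/2}\,\hat\phi\,P$ with $P=(\hat\phi+u)^{-1}$. Because the origin is interior to $Q$, the tail stays bounded away from $0$, so $\hat\phi$ is $C^{n+1}$ there by (H3) and $P$ is smooth by periodicity, and the Leibniz rule applies:
\[
D^\alpha\hat{L}_\phi(\xi)=(2\pi)^{-n/2}\sum_{\beta\le\alpha}\binom{\alpha}{\beta}\,D^\beta\hat\phi(\xi)\,D^{\alpha-\beta}P(\xi).
\]
For every $\beta\le\alpha$ we have $|\beta|\le|\alpha|\le n+1$ and $|\alpha-\beta|\le n+1$, so Corollary~\ref{cor 2} bounds $\|D^{\alpha-\beta}P\|_{L^\infty(\mathbb{R}^n)}$ while (H4) gives $|D^\beta\hat\phi(\xi)|\le C\|\xi\|^{-(n+\epsilon)}$ for $\|\xi\|$ large. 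Summing over the finitely many $\beta\le\alpha$ yields a single pointwise estimate
\[
|D^\alpha\hat{L}_\phi(\xi)|\le C\|\xi\|^{-(n+\epsilon)}\qquad(\|\xi\|\ \text{large}),
\]
and on the intervening bounded shell (between $\partial Q$ and the radius where the big-$O$ takes effect) the integrand is continuous by (H3) and bounded by Corollary~\ref{cor 2}, hence integrable there.

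It then remains to integrate the tail bound. Since $\mathbb{R}^n\setminus Q\subseteq\{\|\xi\|>\pi\}$, passing to spherical coordinates gives
\[
\int_{\mathbb{R}^n\setminus Q}|D^\alpha\hat{L}_\phi(\xi)|\,d\xi\le C\int_\pi^\infty r^{-(n+\epsilon)}r^{n-1}\,dr=C\int_\pi^\infty r^{-1-\epsilon}\,dr<\infty,
\]
the integral converging precisely because the exponent $n+\epsilon$ exceeds $n$. Combining the cube and tail contributions yields $D^\alpha\hat{L}_\phi\in L^1(\mathbb{R}^n)$.

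I expect the only delicate point to be the bookkeeping that every multi-index arising in the Leibniz expansion has order at most $n+1$, so that Corollary~\ref{cor 2} and (H4) apply term by term; once that is verified, the radial integrability of $\|\xi\|^{-(n+\epsilon)}$ — which is exactly what motivates the decay exponent chosen in (H4) — closes the argument.
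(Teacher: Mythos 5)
Your proof is correct and follows exactly the paper's strategy: split $\mathbb{R}^n$ into $[-\pi,\pi]^n$, where Corollary~\ref{cor 1} gives boundedness on a set of finite measure, and the complement, where the factorization $\hat{L}_\phi=\hat\phi\, P$ together with Corollary~\ref{cor 2} and the decay in (H4) yields an integrable tail. The paper states this in one sentence; you have simply supplied the Leibniz-rule bookkeeping and the radial integration that it leaves implicit.
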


\begin{proof}
Since $\hat{L}_\phi(\xi)=\hat\phi(\xi)P(\xi)$, we see that near the origin Corollary \ref{cor 1} controls the growth, while away from the origin Corollary \ref{cor 2} and (H4) imply the result. 
\end{proof}

\begin{cor}\label{cor 3}
If $\phi$ is a cardinal interpolator, then $L_\phi(x)$ is continuous and there exists a constant $C>0$ such that
\begin{equation}\label{L bound}
\left|(1+\| x \|^{n+1})L_\phi(x)   \right| \leq C.
\end{equation}
\end{cor}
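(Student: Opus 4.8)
The plan is to obtain both assertions directly from Proposition \ref{prop 1}, which guarantees that $D^\alpha \hat L_\phi \in L^1(\mathbb{R}^n)$ for every multi-index with $|\alpha| \le n+1$; in particular the case $\alpha = 0$ gives $\hat L_\phi \in L^1(\mathbb{R}^n)$. First I would record that, since $\hat L_\phi$ is integrable, Fourier inversion expresses $L_\phi$ as the inverse transform of an $L^1$ function,
\[
L_\phi(x) = (2\pi)^{-n/2}\int_{\mathbb{R}^n}\hat L_\phi(\xi)e^{ix\cdot\xi}\,d\xi,
\]
so that $L_\phi$ is continuous by dominated convergence and bounded, say $|L_\phi(x)| \le C_0 = (2\pi)^{-n/2}\|\hat L_\phi\|_{L^1}$. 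This settles the continuity claim and the constant term in \eqref{L bound}.

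The decay estimate rests on the standard principle that smoothness of $\hat L_\phi$ forces decay of $L_\phi$. For a multi-index $\alpha$ with $1 \le |\alpha| \le n+1$, I would integrate by parts $|\alpha|$ times in the inversion integral, using $D^\alpha_\xi e^{ix\cdot\xi} = (ix)^\alpha e^{ix\cdot\xi}$, to obtain
\[
(ix)^\alpha L_\phi(x) = (-1)^{|\alpha|}(2\pi)^{-n/2}\int_{\mathbb{R}^n} D^\alpha\hat L_\phi(\xi)e^{ix\cdot\xi}\,d\xi.
\]
Because the right-hand side is the inverse transform of the integrable function $D^\alpha \hat L_\phi$, it is bounded in absolute value by $(2\pi)^{-n/2}\|D^\alpha\hat L_\phi\|_{L^1}$. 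Hence $|x^\alpha L_\phi(x)| \le C_\alpha$ for every such $\alpha$.

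To assemble \eqref{L bound}, I would apply this bound to the multi-indices $\alpha = (n+1)e_j$ for $j = 1,\dots,n$, each of order exactly $n+1$, giving $|x_j|^{n+1}|L_\phi(x)| \le C_j$. Summing over $j$ and invoking the elementary norm comparison $\|x\|^{n+1} \le n^{(n+1)/2}\sum_{j=1}^n |x_j|^{n+1}$ on $\mathbb{R}^n$, together with $|L_\phi(x)| \le C_0$, yields $(1+\|x\|^{n+1})|L_\phi(x)| \le C$ after adjusting constants.

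The step demanding the most care is the repeated integration by parts, where one must check that the boundary terms at infinity vanish. This is precisely where Proposition \ref{prop 1} is used in full strength: writing $\hat L_\phi = \hat\phi\, P$ with $P$ bounded together with its derivatives (Corollary \ref{cor 2}) and $D^\gamma\hat\phi = O(\|\xi\|^{-(n+\epsilon)})$ by (H4), the Leibniz rule shows that each intermediate derivative $D^\beta \hat L_\phi$ with $\beta \le \alpha$ is continuous and decays like $\|\xi\|^{-(n+\epsilon)}$, hence vanishes at infinity. Consequently the boundary contributions drop out and the integration by parts is legitimate; once this is justified, the remainder is routine bookkeeping with constants.
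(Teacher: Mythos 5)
Your proposal is correct and follows essentially the same route as the paper: both deduce continuity from the $|\alpha|=0$ case of Proposition \ref{prop 1} and obtain the decay by bounding $|x^\alpha L_\phi(x)|$ by $\|D^\alpha\hat L_\phi\|_{L^1}$ for $|\alpha|\leq n+1$, the paper compressing this into the single displayed inequality $\left|(1+\|x\|^{n+1})L_\phi(x)\right|\leq\sum_{|\alpha|\leq n+1}\|D^\alpha\hat L_\phi\|_{L^1(\mathbb{R}^n)}$. You simply supply the details (the integration by parts, the vanishing boundary terms, and the norm comparison $\|x\|^{n+1}\leq n^{(n+1)/2}\sum_j|x_j|^{n+1}$) that the paper leaves implicit.
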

\begin{proof}
Continuity follows from Proposition \ref{prop 1} by letting $|\alpha|=0$.  The bound \eqref{L bound}, follows as well since
\[
\left|(1+\| x \|^{n+1})L_\phi(x)   \right|\leq \sum_{|\alpha|\leq n+1}\left\| D^\alpha\hat{L}_\phi (\xi) \right\|_{L^1(\mathbb{R}^n)}.
\]
\end{proof}
We now show why $L_{\phi}(x)$ is called a fundamental function of interpolation.
\begin{lem}
$L_{{\phi}}(x)$ satisfies $L_{{\phi}}(k)=\delta_{0,k}$ for $k\in\mathbb{Z}^n$.
\end{lem}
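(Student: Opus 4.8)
The plan is to recover the values $L_\phi(k)$ by Fourier inversion and then exploit the periodic structure concealed in the denominator of \eqref{fundamental function}. Proposition \ref{prop 1} with $|\alpha|=0$ gives $\hat{L}_\phi\in L^1(\mathbb{R}^n)$, and Corollary \ref{cor 3} guarantees that $L_\phi$ is continuous, so the inversion formula
\[
L_\phi(k)=(2\pi)^{-n/2}\int_{\mathbb{R}^n}\hat{L}_\phi(\xi)e^{ik\cdot\xi}\,d\xi
\]
holds pointwise for every $k\in\mathbb{Z}^n$. First I would decompose $\mathbb{R}^n$ into the translated cubes $[-\pi,\pi]^n+2\pi m$ for $m\in\mathbb{Z}^n$, substitute $\xi=\eta+2\pi m$ on each piece, and use $e^{ik\cdot 2\pi m}=1$ (valid since $k,m\in\mathbb{Z}^n$) to factor the exponential out of the sum over $m$. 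This rewrites $L_\phi(k)$ as a single integral over $[-\pi,\pi]^n$ of the periodization $\sum_{m\in\mathbb{Z}^n}\hat{L}_\phi(\eta+2\pi m)$ against $e^{ik\cdot\eta}$.

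The crux is that this periodization collapses to a constant. Expanding \eqref{fundamental function}, the denominator attached to the $m$-th term is $\sum_{j\in\mathbb{Z}^n}\hat\phi(\eta+2\pi m-2\pi j)$; after the substitution $j\mapsto j+m$ this becomes $\sum_{j\in\mathbb{Z}^n}\hat\phi(\eta-2\pi j)$, which is independent of $m$. Consequently
\[
\sum_{m\in\mathbb{Z}^n}\hat{L}_\phi(\eta+2\pi m)=(2\pi)^{-n/2}\,\frac{\sum_{m\in\mathbb{Z}^n}\hat\phi(\eta+2\pi m)}{\sum_{j\in\mathbb{Z}^n}\hat\phi(\eta-2\pi j)}=(2\pi)^{-n/2},
\]
since the numerator and denominator are the same sum up to reindexing. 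Substituting this back leaves $L_\phi(k)=(2\pi)^{-n}\int_{[-\pi,\pi]^n}e^{ik\cdot\eta}\,d\eta$, which factors as a product of one-dimensional integrals $\int_{-\pi}^{\pi}e^{ik_l\eta_l}\,d\eta_l$; each of these equals $2\pi$ when $k_l=0$ and vanishes when $k_l\in\mathbb{Z}\setminus\{0\}$. The product therefore equals $(2\pi)^n\delta_{0,k}$, yielding $L_\phi(k)=\delta_{0,k}$ as claimed.

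The step that genuinely requires care is the interchange of summation over the cubes with integration when forming the periodization, but this is handled at once by the integrability already established. Indeed, by Tonelli's theorem,
\[
\int_{[-\pi,\pi]^n}\sum_{m\in\mathbb{Z}^n}\left|\hat{L}_\phi(\eta+2\pi m)\right|\,d\eta=\int_{\mathbb{R}^n}\left|\hat{L}_\phi(\xi)\right|\,d\xi=\left\|\hat{L}_\phi\right\|_{L^1(\mathbb{R}^n)}<\infty,
\]
so the periodization lies in $L^1([-\pi,\pi]^n)$, is finite for almost every $\eta$, and Fubini's theorem legitimizes both the rearrangement into cubes and the passage of the sum through the integral. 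Thus the only nontrivial input beyond this routine justification is the algebraic cancellation in the second paragraph, and the positivity in (H2) ensures the denominator never vanishes so that all the ratios above are well defined.
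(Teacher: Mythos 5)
Your proof is correct and follows essentially the same route as the paper: Fourier inversion, decomposition of $\mathbb{R}^n$ into translated cubes, and the observation that the periodization $\sum_{m}\hat{L}_\phi(\eta+2\pi m)$ collapses to the constant $(2\pi)^{-n/2}$. You simply make explicit two steps the paper leaves implicit, namely the reindexing argument behind the collapse of the periodization and the Tonelli justification for interchanging sum and integral.
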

\begin{proof}
The proof is an easy calculation invoking the inversion theorem:
\begin{align*}
L_{{\phi}}(k)=&\dfrac{1}{(2\pi)^{n/2}}\int_{\mathbb{R}^n}\hat{L}_{{\phi}}(\xi)e^{ik\cdot\xi}d\xi \\
=&\dfrac{1}{(2\pi)^{n/2}}\sum_{m\in\mathbb{Z}^n}\int_{[-\pi,\pi]^n}\hat{L}_{{\phi}}(\xi-2\pi m)e^{ik\cdot\xi}d\xi \\
=&\dfrac{1}{(2\pi)^{n/2}}\int_{[-\pi,\pi]^n}e^{ik\cdot\xi}\sum_{m\in\mathbb{Z}^n}\hat{L}_{{\phi}}(\xi-2\pi m)d\xi \\
=&\dfrac{1}{(2\pi)^n}\int_{[-\pi,\pi]^n}e^{ik\cdot\xi}d\xi =\delta_{0,k}.
\end{align*}
\end{proof}

We turn now to constructing an interpolant consisting of translates of $L_{\phi}(x)$.
\begin{claim}If $\{f(j)\}_{j\in\mathbb{Z}^n}\in l^{\infty}(\mathbb{Z}^n)$ and we define $I_{{\phi}}f(x)$ pointwise as follows:
\begin{equation}\label{I def}
I_{{\phi}}f(x)=\sum_{j\in\mathbb{Z}^n}f(j)L_{{\phi}}(x-j).
\end{equation}
where $\phi(\xi)$ is a cardinal interpolator, then $I_{{\phi}}f(x)$ is well defined and continuous.
\end{claim}
\begin{proof}
As a result of \eqref{L bound} we have the following:
\begin{equation}\label{claimbd}
|I_{{\phi}}f(x)|\leq C\|\{f(j)  \}  \|_{l^{\infty}(\mathbb{Z}^n)}\sum_{j\in\mathbb{Z}^n}\dfrac{1}{1+\|x-j\|^{n+1}} < \infty.
\end{equation}
Continuity follows from the Weierstrass M test and the continuity of $L_{\phi} (x)$.  Notice this bound holds for all $x\in\mathbb{R}^n$ because the last sum is periodic.
\end{proof}

We are now in position to prove our first theorem.
\begin{thm}
Let $1\leq p \leq \infty$.  If $\{f(j)\}\in l^p( \mathbb{Z}^n)$ and $I_{{\phi}}f(x)$ is defined by \eqref{I def}, then $I_{{\phi}}f(x)\in L^p(\mathbb{R}^n)$.
\end{thm}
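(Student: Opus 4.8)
The plan is to treat the two endpoints $p=1$ and $p=\infty$ separately and then fill the intermediate range $1<p<\infty$ by interpolation. First note that $I_\phi f$ is well defined for every admissible $f$: since $\|\{f(j)\}\|_{l^\infty}\leq\|\{f(j)\}\|_{l^p}$ for all $1\leq p\leq\infty$, the data lies in $l^\infty(\mathbb{Z}^n)$, so the Claim already guarantees that the series \eqref{I def} converges pointwise to a continuous function; only the $L^p$ bound remains. Throughout I would use the pointwise majorant $|L_\phi(x)|\leq g(x):=C(1+\|x\|^{n+1})^{-1}$ supplied by Corollary \ref{cor 3}, together with the crucial fact that $g\in L^1(\mathbb{R}^n)$, which holds precisely because $n+1>n$.

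The case $p=\infty$ is the content of the Claim: the estimate \eqref{claimbd} bounds $|I_\phi f(x)|$ by $C\|\{f(j)\}\|_{l^\infty}$ times the bounded periodic sum $\sum_{j}(1+\|x-j\|^{n+1})^{-1}$, so $\|I_\phi f\|_{L^\infty}\leq C\|\{f(j)\}\|_{l^\infty}$. For $p=1$ I would estimate directly, interchanging sum and integral by Tonelli's theorem:
\[
\|I_\phi f\|_{L^1}\leq\int_{\mathbb{R}^n}\sum_{j\in\mathbb{Z}^n}|f(j)|\,|L_\phi(x-j)|\,dx=\|L_\phi\|_{L^1}\,\|\{f(j)\}\|_{l^1},
\]
where $\|L_\phi\|_{L^1}<\infty$ because $|L_\phi|\leq g\in L^1(\mathbb{R}^n)$. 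Thus the linear map $T:\{f(j)\}\mapsto I_\phi f$ is bounded $l^1\to L^1$ and $l^\infty\to L^\infty$, and the Riesz--Thorin theorem (with counting measure on $\mathbb{Z}^n$ and Lebesgue measure on $\mathbb{R}^n$) yields boundedness $l^p\to L^p$ for every $1\leq p\leq\infty$, which is the assertion.

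The one point deserving care is the intermediate range, and here I see two routes. The interpolation route is clean but applies Riesz--Thorin to an operator acting between two \emph{different} measure spaces; this is legitimate, but it should be flagged that linearity of $T$ and the two endpoint estimates are exactly the hypotheses invoked. Alternatively, all $p$ can be dispatched at once by a single convolution estimate. Since $g$ varies slowly over unit cubes, for $y$ in the unit cube $Q_j=j+[-\tfrac12,\tfrac12]^n$ one has $g(x-j)\leq C_n\,g(x-y)$, and integrating over $Q_j$ (which has volume one) gives $g(x-j)\leq C_n\int_{Q_j}g(x-y)\,dy$. Summing against $|f(j)|$ produces $|I_\phi f(x)|\leq C_n(h*g)(x)$, where $h=\sum_{j}|f(j)|\,\mathbf{1}_{Q_j}$ satisfies $\|h\|_{L^p}=\|\{f(j)\}\|_{l^p}$ because the cubes tile $\mathbb{R}^n$; Young's inequality then gives $\|I_\phi f\|_{L^p}\leq C_n\|g\|_{L^1}\|\{f(j)\}\|_{l^p}$ in a single stroke.

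In either approach the only genuine obstacle is verifying the slow-variation bound $g(x-j)\leq C_n\,g(x-y)$ for $y\in Q_j$. This follows from $\|x-y\|\leq\|x-j\|+\sqrt{n}/2$ together with the polynomial form of $g$: writing $t=\|x-j\|$, one checks that $1+(t+\sqrt{n}/2)^{n+1}\leq C_n(1+t^{n+1})$ uniformly in $t\geq0$, the ratio being bounded on compact $t$ and tending to $1$ as $t\to\infty$. Everything else is a routine application of Tonelli, Young, or Riesz--Thorin, so I expect the write-up to be short once this comparison is recorded.
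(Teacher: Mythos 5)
Your primary argument is essentially the paper's own proof: the $p=\infty$ endpoint is exactly the bound \eqref{claimbd} from the Claim, the $p=1$ endpoint is the Tonelli/Minkowski estimate using $\|L_\phi\|_{L^1(\mathbb{R}^n)}<\infty$ from Corollary \ref{cor 3}, and Riesz--Thorin between the measure spaces $l^p(\mathbb{Z}^n)$ (counting measure) and $L^p(\mathbb{R}^n)$ fills in $1<p<\infty$. This matches the paper step for step, including the preliminary observation that the data lies in $l^\infty(\mathbb{Z}^n)$ so that $I_\phi f$ is already defined and continuous. Your alternative route is genuinely different and also correct: majorizing $|I_\phi f|$ pointwise by $C_n(h*g)$ with $h=\sum_{j}|f(j)|\,\mathbf{1}_{Q_j}$ and $g(x)=C(1+\|x\|^{n+1})^{-1}$, then applying Young's inequality, handles all $p$ at once without any interpolation theorem, at the modest cost of the slow-variation comparison $1+(t+\sqrt{n}/2)^{n+1}\leq C_n(1+t^{n+1})$, which you verify correctly and which is the only nontrivial point. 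The interpolation route is shorter to write given that both endpoints are already in hand; the convolution route is more elementary and makes the uniformity of the constant $C_p$ in $p$ transparent. Either write-up is acceptable; the paper chooses the former.
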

\begin{proof}
We will use the Riesz-Thorin interpolation theorem.  Consider the linear map given by $\{f(j)\}\mapsto I_{\phi} f(x)$.  From the above claim, we can see that $I_\phi$ maps $l^\infty(\mathbb{Z}^n)$ to $C(\mathbb{R}^n)$.  In order to apply the theorem, we must check the $p=1$ case and the $p=\infty$ case.  For $p=1$ we have:
\begin{align*}
\|  I_{\phi} f(x)  \|_{L^1(\mathbb{R}^n)} =& \left\| \sum_{j\in\mathbb{Z}^n}f(j)L_{\phi}(x-j)  \right\|_{L^1(\mathbb{R}^n)} \\
\leq& \| \{f(j)\} \|_{l^1(\mathbb{Z}^n)} \| L_{{\phi}}(x-j) \|_{L^1(\mathbb{R}^n)}\\
\leq& C\bigg\|\dfrac{1}{1+\|x\|^{n+1}}\bigg\|_{L^1(\mathbb{R}^n)}  \| \{f(j)\} \|_{l^1(\mathbb{Z})} \\
\leq& C_1\|\{ f(j) \}  \|_{l^1(\mathbb{Z}^n)}
\end{align*}

Here we've used Minkowski's inequality and Corollary \ref{cor 3}. 
The $p=\infty$ case is the content of \eqref{claimbd}.  Thus the Riesz-Thorin interpolation theorem guarantees a constant $C_p$ independent of $\{f(j)\}$ such that
\[
\|  I_{\phi} f(x)  \|_{L^p(\mathbb{R}^n)}\leq C_p\|\{f(j)\}  \|_{l^{p}(\mathbb{Z}^n)}.
\]
\end{proof}

\section{Regular Families of Cardinal Interpolators}
The goal of this section is to introduce regularity conditions on a family of cardinal interpolators $\{\phi_\alpha\}$ so that we may get recovery results similar to those found in \cite{baxter}, \cite{splines}, \cite{RS 1}, \cite{RS 2}, and \cite{RS}.

\begin{df}
We call a family of functions $\{\phi_{\alpha}(\xi)\}_{\alpha\in A}$ a \emph{regular family of cardinal interpolators} if for each $\alpha\in A\subset(0,\infty)$, $\phi_{\alpha}(x)$ is a cardinal interpolator and in addition to this, we have:
\begin{enumerate}
\item[(R1)] For $j\in\mathbb{Z}^n\setminus\{0\}$, $\xi\in[-\pi,\pi]^n$, define $M_{j,\alpha}(\xi)=\displaystyle\dfrac{\hat{\phi}_\alpha(\xi+2\pi j)}{\hat{\phi}_\alpha(\xi)} $, then for $j\in\mathbb{Z}^n\setminus\{0\}$, $\displaystyle\lim_{\alpha\to\infty}M_{j,\alpha}(\xi)=0$ for almost every $\xi\in[-\pi,\pi]^n$.
\item[(R2)] There exists $M_j\in l^1(\mathbb{Z}^n\setminus\{0\})$, independent of $\alpha$, such that for all $j\in\mathbb{Z}^n\setminus\{0\}$, $M_{j,\alpha}(\xi)\leq M_j$.
\end{enumerate}
\end{df}

Depending on the interpolator $\phi(x)$, the parameter $\alpha$ may be continuous or discrete.  We will encounter examples of both in the following section.

\begin{prop}\label{L limit} If $\{\phi_\alpha\}$ is a regular regular family of cardinal interpolators, then $\hat{L}_{{\phi}_\alpha}(\xi) \to (2\pi)^{-n/2}\chi_{[-\pi,\pi]^n}(\xi)$ almost everywhere.
\end{prop}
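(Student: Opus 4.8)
The plan is to reduce the statement to the behavior of the ratios $M_{j,\alpha}$ and then apply a dominated-convergence argument to the defining series \eqref{fundamental function}. For $\xi\in[-\pi,\pi]^n$ I would divide numerator and denominator of $(2\pi)^{n/2}\hat{L}_{\phi_\alpha}(\xi)$ by $\hat{\phi}_\alpha(\xi)$, which is legitimate since (H2) guarantees $\hat{\phi}_\alpha(\xi)\geq\delta>0$ on the cube. After re-indexing $j\mapsto -j$ in the sum, so that $\hat{\phi}_\alpha(\xi-2\pi j)/\hat{\phi}_\alpha(\xi)$ becomes $M_{j,\alpha}(\xi)$, this yields the compact form
\[
(2\pi)^{n/2}\hat{L}_{\phi_\alpha}(\xi)=\left[1+\sum_{j\neq 0}M_{j,\alpha}(\xi)\right]^{-1}.
\]

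First I would treat $\xi$ in the cube. Condition (R1) gives $M_{j,\alpha}(\xi)\to 0$ as $\alpha\to\infty$ for almost every $\xi$ and each fixed $j$; taking the union over the countably many $j\in\mathbb{Z}^n\setminus\{0\}$ of the corresponding null sets still leaves a null set, so off a single null set every term tends to $0$. Condition (R2) supplies the summable, $\alpha$-independent majorant $M_j$, so the dominated convergence theorem for the counting measure lets me interchange the limit with the sum and conclude $\sum_{j\neq 0}M_{j,\alpha}(\xi)\to 0$ almost everywhere on $[-\pi,\pi]^n$. Hence the bracket tends to $1$ and $\hat{L}_{\phi_\alpha}(\xi)\to(2\pi)^{-n/2}$ there.

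Next I would handle $\xi\notin[-\pi,\pi]^n$. For almost every such $\xi$ there is a unique $k\in\mathbb{Z}^n\setminus\{0\}$ with $\eta:=\xi-2\pi k\in[-\pi,\pi]^n$. The denominator $\sum_j\hat{\phi}_\alpha(\xi-2\pi j)$ is invariant under this $2\pi$-shift, being merely a re-indexing, so dividing by $\hat{\phi}_\alpha(\eta)$ and using $\hat{\phi}_\alpha(\xi)/\hat{\phi}_\alpha(\eta)=M_{k,\alpha}(\eta)$ gives
\[
(2\pi)^{n/2}\hat{L}_{\phi_\alpha}(\xi)=\frac{M_{k,\alpha}(\eta)}{1+\sum_{m\neq 0}M_{m,\alpha}(\eta)}.
\]
By the previous paragraph the denominator tends to $1$, while (R1) forces the numerator $M_{k,\alpha}(\eta)\to 0$, so $\hat{L}_{\phi_\alpha}(\xi)\to 0$ almost everywhere off the cube. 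Combining the two regions gives the claimed almost-everywhere limit.

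I expect the only real subtlety to be the justification of the termwise passage to the limit inside the infinite sum: (R1) alone controls each individual ratio but says nothing about the tail of the series, and it is precisely the uniform $l^1$-domination in (R2) that rules out mass escaping to infinity in $j$ and legitimizes dominated convergence. The bookkeeping with the countable family of null sets and the reduction outside the cube by periodicity are routine by comparison.
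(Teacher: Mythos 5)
Your proposal is correct and follows essentially the same route as the paper: rewrite $(2\pi)^{n/2}\hat{L}_{\phi_\alpha}$ on the cube as $\bigl[1+\sum_{j\neq0}M_{j,\alpha}\bigr]^{-1}$, apply the dominated convergence theorem for series via (R1) and (R2), and handle $\xi$ outside the cube by translating back into $[-\pi,\pi]^n$ and bounding by $M_{k,\alpha}$. Your treatment is in fact slightly more careful than the paper's (the re-indexing $j\mapsto-j$, the countable union of null sets, and the explicit identity for $\hat L_{\phi_\alpha}$ off the cube rather than just an upper bound), but the argument is the same.
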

\begin{proof}
For $\xi\in[-\pi,\pi]^n$ we have:
\[
\hat{L}_{{\phi}_\alpha}(\xi)=(2\pi)^{-n/2}\bigg\{1+\dfrac{u_{\alpha}(\xi)}{\hat{\phi}_{\alpha}(\xi)}   \bigg\}^{-1}.
\]
If we can show $\dfrac{u_{\alpha}(\xi)}{\hat{\phi}_{\alpha}(\xi)}  \to 0$ we will be done with this part.  Conditions (R1) and (R2) allow us to apply the dominated convergence theorem for series, which gives us the result.  Now we have only to check $\xi\notin [-\pi,\pi]^n$.  We may write $\xi=2\pi j + r$, where $j\in\mathbb{Z}^n\setminus\{0\}$ and $r\in[-\pi,\pi]^n$.  Then we have:
\[
\hat{L}_{{\phi}_\alpha}(\xi)=(2\pi)^{-n/2} \dfrac{\hat{\phi}_{\alpha}(2\pi j + r)}{\hat{\phi}_{\alpha}(r)}\bigg\{1+\dfrac{u_{\alpha}(r)}{\hat{\phi}_{\alpha}(r)}   \bigg\}^{-1} \leq (2\pi)^{-n/2} M_{j,\alpha}(\xi) .
\]
The last inequality comes from (H2), thus (R1) provides the desired result.
\end{proof}

\begin{prop}\label{u lim}
If $\{\phi_\alpha\}$ is regular a regular family of cardinal interpolators, then $\displaystyle \lim_{\alpha\to\infty}\sum_{j\neq0}\hat{L}_{\phi_\alpha}(\xi+2\pi j)=0$ almost everywhere and in $L^1([-\pi,\pi]^n)$.
\end{prop}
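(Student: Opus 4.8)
The plan is to show that the tail sum $g_\alpha(\xi):=\sum_{j\neq0}\hat{L}_{\phi_\alpha}(\xi+2\pi j)$ converges to $0$ pointwise a.e.\ on $[-\pi,\pi]^n$ and is dominated there by a fixed integrable function; dominated convergence then yields both assertions at once. The single ingredient powering everything is an $\alpha$-uniform, summable majorant for the individual terms, supplied by (R2).

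First I would obtain the key termwise bound. Fix $\xi\in[-\pi,\pi]^n$ and $j\in\mathbb{Z}^n\setminus\{0\}$. Reindexing the periodization in \eqref{fundamental function} gives $\sum_{k\in\mathbb{Z}^n}\hat{\phi}_\alpha(\xi+2\pi j-2\pi k)=\hat{\phi}_\alpha(\xi)+u_\alpha(\xi)$, so
\[
\hat{L}_{\phi_\alpha}(\xi+2\pi j)=(2\pi)^{-n/2}\,\frac{\hat{\phi}_\alpha(\xi+2\pi j)}{\hat{\phi}_\alpha(\xi)+u_\alpha(\xi)}=(2\pi)^{-n/2}\,M_{j,\alpha}(\xi)\Bigl\{1+\tfrac{u_\alpha(\xi)}{\hat{\phi}_\alpha(\xi)}\Bigr\}^{-1}.
\]
Because $\hat{\phi}_\alpha\geq0$ and $\hat{\phi}_\alpha(\xi)>0$ on the cube by (H2), the bracketed factor lies in $(0,1]$, whence $0\leq\hat{L}_{\phi_\alpha}(\xi+2\pi j)\leq(2\pi)^{-n/2}M_{j,\alpha}(\xi)\leq(2\pi)^{-n/2}M_j$ by (R2). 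Since $\{M_j\}\in l^1(\mathbb{Z}^n\setminus\{0\})$, this is an $\alpha$-independent summable majorant for the terms of $g_\alpha$.

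Next, for each fixed $j$ the same bound together with (R1) gives $\hat{L}_{\phi_\alpha}(\xi+2\pi j)\to0$ as $\alpha\to\infty$ for a.e.\ $\xi$ (this also follows from Proposition \ref{L limit}, since $\xi+2\pi j\notin[-\pi,\pi]^n$ for a.e.\ $\xi$). The dominated convergence theorem for series---applied along an arbitrary sequence $\alpha_k\to\infty$, exactly as in the proof of Proposition \ref{L limit}---then lets me pass the limit inside the sum over $j$, yielding $g_\alpha(\xi)\to0$ for a.e.\ $\xi\in[-\pi,\pi]^n$. For the $L^1$ statement I note that the termwise bound sums to $0\leq g_\alpha(\xi)\leq(2\pi)^{-n/2}\sum_{j\neq0}M_j=:C$, a finite constant that is integrable over the finite-measure cube $[-\pi,\pi]^n$; combined with the a.e.\ convergence just established, a second application of dominated convergence gives $\int_{[-\pi,\pi]^n}g_\alpha\to0$. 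The only genuine obstacle is producing the uniform $l^1$ majorant in the first step, and (R2) together with the representation borrowed from Proposition \ref{L limit} hands this to us; the two convergence passages are then routine.
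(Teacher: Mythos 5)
Your proposal is correct and follows essentially the same route as the paper: the termwise bound $\hat{L}_{\phi_\alpha}(\xi+2\pi j)\leq (2\pi)^{-n/2}M_{j,\alpha}(\xi)$, the uniform summable majorant $M_j$ from (R2), and the dominated convergence theorem for series, with a second (routine) dominated convergence argument over the finite-measure cube for the $L^1$ claim. You simply spell out the derivation of the termwise bound and the $L^1$ step that the paper leaves implicit.
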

\begin{proof}
We note that if $j\in\mathbb{Z}^n\setminus\{0\}$, then $\hat{L}_{\phi_\alpha}(\xi+2\pi j) \leq M_{j,\alpha}(\xi)$ by (R1), thus (R2) allows us to use the dominated convergence theorem for series which gives the result.
\end{proof}

These propositions allow us to prove the main theorem.
\begin{thm}
Suppose that $\{\hat{{\phi}}_{\alpha}(\xi)\}_{\alpha\in A}$ is a regular family of cardinal interpolators and let $f(x)\in PW_\pi$.  Then we have the following limits:
\begin{enumerate}
\item[$(a)$] $\displaystyle \lim_{\alpha\to\infty}\bigg\| \sum_{j\in\mathbb{Z}^n}f(j)L_{{\phi}_\alpha}(x-j) - f(x) \bigg \|_{L^2(\mathbb{R}^n)} =0,$
\item[$(b)$] $\displaystyle \lim_{\alpha\to\infty}\bigg| \sum_{j\in\mathbb{Z}^n}f(j)L_{{\phi}_\alpha}(x-j) - f(x) \bigg| =0$  uniformly in $\mathbb{R}^n$.
\end{enumerate}
\end{thm}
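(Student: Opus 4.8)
The plan is to pass to the Fourier side and exploit the two limit propositions just established. Writing $I_{\phi_\alpha}f(x)=\sum_{j}f(j)L_{\phi_\alpha}(x-j)$ as in \eqref{I def}, I first record its transform. Since translation by $j$ corresponds to multiplication by $e^{-ij\cdot\xi}$, one expects
\[
\widehat{I_{\phi_\alpha}f}(\xi)=\hat{L}_{\phi_\alpha}(\xi)\,\sigma_f(\xi),\qquad \sigma_f(\xi):=\sum_{j\in\mathbb{Z}^n}f(j)e^{-ij\cdot\xi},
\]
where $\sigma_f$ is the $2\pi$-periodic symbol of the data; by Lemma \ref{sq sum} it lies in $L^2([-\pi,\pi]^n)$, and Poisson summation gives $\sigma_f(\xi)=(2\pi)^{n/2}\sum_{k}\hat f(\xi+2\pi k)$. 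Because $f\in PW_\pi$, the support condition collapses this to $\sigma_f(\xi)=(2\pi)^{n/2}\hat f(\xi)$ on the fundamental cell $[-\pi,\pi]^n$. I would justify the displayed identity by first checking it for finitely supported data (where it is immediate) and then passing to the limit using the $l^2\to L^2$ boundedness of $\{f(j)\}\mapsto I_{\phi_\alpha}f$ from the preceding theorem together with $L^2$-convergence of $\sigma_f$. This reduces both parts to estimating $\widehat{I_{\phi_\alpha}f}-\hat f$ separately on $[-\pi,\pi]^n$ and on its complement $\bigcup_{j\ne0}(2\pi j+[-\pi,\pi]^n)$.

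For part $(a)$ I would invoke Plancherel, so that the quantity in $(a)$ equals $\|\widehat{I_{\phi_\alpha}f}-\hat f\|_{L^2(\mathbb{R}^n)}$, and split this norm over the two regions. On the cell the integrand is $|\hat f(\xi)|^2\big|(2\pi)^{n/2}\hat L_{\phi_\alpha}(\xi)-1\big|^2$; recalling from (H2) that $0\le(2\pi)^{n/2}\hat L_{\phi_\alpha}\le1$, it is dominated by the fixed integrable function $|\hat f|^2$, and Proposition \ref{L limit} supplies pointwise convergence to $0$, so dominated convergence disposes of the cell. On the complement $\hat f$ vanishes, and periodizing $\sigma_f$ turns the integral into $\sum_{j\ne0}\int_{[-\pi,\pi]^n}|\hat L_{\phi_\alpha}(r+2\pi j)|^2(2\pi)^n|\hat f(r)|^2\,dr$. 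The key bound here is $|\hat L_{\phi_\alpha}(r+2\pi j)|^2\le(2\pi)^{-n}M_{j,\alpha}(r)\le(2\pi)^{-n}M_j$, obtained by combining $(2\pi)^{n/2}\hat L_{\phi_\alpha}\le1$ with the inequality $\hat L_{\phi_\alpha}(r+2\pi j)\le(2\pi)^{-n/2}M_{j,\alpha}(r)$ read off from the proof of Proposition \ref{L limit}. Thus the summand is dominated by $M_j|\hat f(r)|^2$, which is integrable on the product of counting measure on $\mathbb{Z}^n\setminus\{0\}$ and Lebesgue measure on the cell because $M_j\in l^1$ by (R2) and $\hat f\in L^2$; since each summand tends to $0$ by (R1), dominated convergence on this product space gives $(a)$.

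Part $(b)$ follows the same dissection but on the $L^1$ side. The estimates above also show $\widehat{I_{\phi_\alpha}f}-\hat f\in L^1(\mathbb{R}^n)$, so Fourier inversion yields the $x$-independent bound $\big|I_{\phi_\alpha}f(x)-f(x)\big|\le(2\pi)^{-n/2}\|\widehat{I_{\phi_\alpha}f}-\hat f\|_{L^1(\mathbb{R}^n)}$, and it suffices to drive this $L^1$-norm to $0$. On the cell the integrand is $|\hat f(\xi)|\,|(2\pi)^{n/2}\hat L_{\phi_\alpha}(\xi)-1|\le|\hat f(\xi)|$, with $\hat f\in L^1([-\pi,\pi]^n)$ by Cauchy--Schwarz, so dominated convergence again applies. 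On the complement the integral equals $\int_{[-\pi,\pi]^n}(2\pi)^{n/2}|\hat f(r)|\big(\sum_{j\ne0}\hat L_{\phi_\alpha}(r+2\pi j)\big)\,dr$; the inner sum is bounded uniformly in $\alpha$ by the constant $(2\pi)^{-n/2}\sum_{j\ne0}M_j$ and tends to $0$ almost everywhere by Proposition \ref{u lim}, so a final dominated convergence against the fixed weight $(2\pi)^{n/2}|\hat f|$ completes $(b)$.

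I expect the main obstacle to be the first step: rigorously justifying the transform identity $\widehat{I_{\phi_\alpha}f}=\hat L_{\phi_\alpha}\sigma_f$ and the attendant Poisson summation, that is, controlling the interchange of the infinite cardinal series with the Fourier transform. Once that representation is secured, the remainder is careful but routine bookkeeping, the only delicate point being to arrange every dominating bound ($|\hat f|^2$, $M_j|\hat f|^2$, and $(2\pi)^{n/2}|\hat f|$ against the uniformly bounded tail) to be integrable independently of $\alpha$, so that (R1), (R2), Proposition \ref{L limit}, and Proposition \ref{u lim} can be fed into the dominated convergence theorem.
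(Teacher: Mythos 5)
Your proposal is correct and follows essentially the same route as the paper: pass to the Fourier side via Plancherel (for $(a)$) and Fourier inversion (for $(b)$), use the identity $\sum_j f(j)e^{-ij\cdot\xi}=(2\pi)^{n/2}\hat f(\xi)$ on $[-\pi,\pi]^n$, split into the fundamental cell and its complement, bound $\hat L_{\phi_\alpha}(\xi+2\pi j)$ by $(2\pi)^{-n/2}M_{j,\alpha}(\xi)$, and finish with Propositions \ref{L limit} and \ref{u lim}, (R1), (R2), and dominated convergence. The only deviations are cosmetic choices of dominating functions (e.g.\ using $0\le(2\pi)^{n/2}\hat L_{\phi_\alpha}\le1$ to get the slightly sharper bound $M_j|\hat f|^2$, and replacing the paper's Cauchy--Schwarz step in $(b)$ by a direct dominated-convergence argument), which do not change the substance of the argument.
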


\begin{proof}
$(a)$  Since $f\in PW_\pi$, we have that $\{f(j)\}_{j\in\mathbb{Z}^n}\in l^2(\mathbb{Z}^n)$.  This, in turn, implies that $\displaystyle \sum_{j\in\mathbb{Z}^n}f(j)L_{{\phi}_\alpha}(x-j) \in L^2(\mathbb{R}^n)$.  So we may use Plancherel's theorem to estimate the norm.
\begin{align*}
 &\left\| \sum_{j\in\mathbb{Z}^n}f(j)L_{{\phi}_\alpha}(x-j) - f(x) \right\|^{2}_{L^2(\mathbb{R}^n)} \\
=&\left\| \hat{L}_{{\phi}_\alpha}(\xi)\sum_{j\in\mathbb{Z}^n}f(j)e^{-ij\cdot\xi} -\hat{f}(\xi) \right\|^{2}_{L^2(\mathbb{R}^n)} \\
=&\int_{[-\pi,\pi]^n}\left|\hat{f}(\xi)\left((2\pi)^{n/2}\hat{L}_{{\phi}_\alpha}(\xi)-1\right)\right|^2d\xi \\
+&\int_{\mathbb{R}^n\setminus[-\pi,\pi]^n} \left| \hat{L}_{{\phi}_\alpha}(\xi)\sum_{j\in\mathbb{Z}^n}f(j)e^{-ij\cdot\xi}\right|^2d\xi \\
\end{align*}
We have used that $f\in\ PW_\pi$ and the well known $L^2(\mathbb{R}^n)$ equality $\displaystyle (2\pi)^{n/2}\hat{f}(\xi)=\sum_{j\in\mathbb{Z}^n}f(j)e^{-ij\cdot\xi}$ for $\xi\in[-\pi,\pi]^n$.
Proposition \ref{L limit} shows that  $(2\pi)^{n/2}\hat{L}_{{\phi}_\alpha}(\xi) \to \chi_{[-\pi,\pi]^n}(\xi)$ almost everywhere,  thus the first integral tends to 0 by the dominated convergence theorem, since we have the elementary estimate $|(2\pi)^{n/2}\hat{L}_{{\phi}_\alpha}(\xi)-1|\leq (2\pi)^{n/2}+1$.  We estimate the second integral.

\begin{align*}
&\int_{\mathbb{R}^n\setminus[-\pi,\pi]^n} | \hat{L}_{{\phi}_\alpha}(\xi)\sum_{j\in\mathbb{Z}^n}f(j)e^{-ij\cdot\xi}|^2d\xi \\
=& \sum_{m\in\mathbb{Z}^n\setminus\{0\}}\int_{[-\pi,\pi]^n}|\hat{L}_{{\phi}_\alpha}(\xi+ 2\pi m)\sum_{j\in\mathbb{Z}^n}f(j)e^{-ij\cdot(\xi+2\pi m)}|^2d\xi \\
=&(2\pi)^{n}\sum_{j\in\mathbb{Z}^n\setminus\{0\}}\int_{[-\pi,\pi]^n}|\hat{L}_{{\phi}_\alpha}(\xi+ 2\pi j)|^2|\hat{f}(\xi)|^2d\xi  \\
\leq& (2\pi)^n \int_{[-\pi,\pi]^n}\sum_{j\in\mathbb{Z}^n\setminus\{0\}}|M_{j,\alpha}(\xi)|^2|\hat{f}(\xi)|^2d\xi
\end{align*}
This last term tends to 0 by the dominated convergence theorem as in Proposition \ref{u lim}, noting that $l^1(\mathbb{Z}^n\setminus\{0\})\subset l^2(\mathbb{Z}^n\setminus\{0\})$.

Part $(b)$ is proved in a similar manner, with help from the Cauchy-Schwarz inequality.
\begin{align*}
&\left| \sum_{j\in\mathbb{Z}^n}f(j)L_{{\phi}_\alpha}(x-j) - f(x)  \right| \\
=&(2\pi)^{-(n/2)}\left|   \int_{\mathbb{R}^n} \left(\hat{L}_{{\phi}_\alpha}(\xi)\sum_{j\in\mathbb{Z}^n}f(j)e^{-ij\xi}  -\hat{f}(\xi) \right)e^{ix\cdot\xi} d\xi \right| \\
\leq& \int_{[-\pi,\pi]^n}\left|\hat{L}_{{\phi}_\alpha}(\xi)-(2\pi)^{-n/2}\right|\left|\hat{f}(\xi)\right|d\xi \\
+&\sum_{j\neq 0}\int_{[-\pi,\pi]^n}\left|\hat{L}_{{\phi}_\alpha}(\xi+2\pi j)\right|\left|\hat{f}(\xi)\right|d\xi  \\
\leq& \|f\|_{PW_\pi} \left\|\hat{L}_{{\phi}_\alpha}(\xi)-(2\pi)^{-n/2}\right\|_{L^2([-\pi,\pi]^n)} \\
+&\|f\|_{PW_\pi}\left\|\sum_{j\neq 0}M_{j,\alpha}(\xi) \right\|_{L^2([-\pi,\pi]^n)}
\end{align*}
From Proposition \ref{L limit} above we can see that the first term tends to 0.  By (R2) we have that the sum in the second term is bounded above by $\|M_j\|_{l^1(\mathbb{Z}^n\setminus\{0\})}^2$, so Proposition \ref{u lim} and the dominated convergence theorem finish the proof. 
\end{proof}

\section{Examples of Regular Families of Interpolators}
In this section, we exhibit three families of regular cardinal interpolators.  Our first example will be that of polyharmonic cardinal splines.  The result that we attain is most similar to the one found in \cite{splines}.  Our second example will be a family of Gaussians, which has also been studied previously, see for instance \cite{RS 1}, \cite{RS 2}.  Finally, we will consider a family of multiquadrics and extend a result which is found in \cite{RS}, as well a provide what appears to be a novel example.  For each example we work out the one dimensional case, followed by the multivariate case often suppressing the straightforward yet tedious details involved in checking (H5).

\subsection{Polyharmonic Cardinal Splines}
Our first example was explored in \cite{splines}, and motivated this treatment.  We begin by considering the one dimensional version of the example found there.  A function or distribution $f$ is called $k$-$harmonic$, where $k\in\{1,2,3,\dots\}$, if it satisfies
\begin{equation}\label{kharm}
\Delta^k f =0
\end{equation}    
on $\mathbb{R}^n$, where $\Delta$ is the Laplacian operator and  $\Delta^k( f)=\Delta(\Delta^{k-1} f)$.  A function which satisfies \eqref{kharm} with $k\geq 1$ is called $polyharmonic$.  The fundamental solution of $\eqref{kharm}$ is given by a constant multiple of $\phi_k(x)=|x|^{2k-1}$, whose Fourier transform is given by $\hat{\phi}_k(\xi)=(2\pi)^{-1/2}|\xi|^{-2k}$.

A polyharmonic cardinal spline is a function or distribution on $\mathbb{R}$ which satisfies
\begin{enumerate}
\item[(i)] $f\in C^{2k-2}(\mathbb{R}^n)$,
\item[(ii)] $\Delta^k f =0 \hspace{.5in}\text{on }\mathbb{R}^n\setminus\mathbb{Z}^n$.
\end{enumerate}
The fundamental functions $L_k(x)$ use $\hat{\phi}_k(\xi)=(2\pi)^{-1/2}|\xi|^{-2k}$.  Several properties of the fundamental function may be found in \cite{splines}, including the fact that it is a $k$-harmonic spline.  We show that $\{|\xi|^{-2k}\}_{k=1}^{\infty}$ is a regular family of cardinal interpolators.  Checking (H1)-(H5) is an elementary exercise in calculus and will be omitted.  We check (R1) and (R2).  For $j\neq0$ and $|\xi|<\pi$ we have
\[
M_{j,k}(\xi)=\dfrac{|\xi|^{2k}}{|\xi+2\pi j|^{2k}}\leq (2|j|-1)^{-2k}.
\]
These terms tend to 0, hence (R1) is satisfied.  For (R2) we note that
\[
 (2|j|-1)^{-2k} \leq  (2|j|-1)^{-2}.
\]
Thus, if $n=1$, polyharminic cardinal splines form a regular family of cardinal interpolators.
For $n>1$, we have $\hat{\phi}_{k}(\xi)=\|\xi\|^{-2k}$, where $\xi\in\mathbb{R}^n$, in this case we must take $k\geq n+1$.  (H1)-(H4) are straightforward, so we will check (H5), (R1), and (R2).  We need to find the derivatives in question.  To this end, we note that for $j=1,\dots,n$ we have
\[
\dfrac{d}{d\xi_j}\left(\|\xi\|^{-2k}\right)=-2k\|\xi\|^{-2k-1}\dfrac{\xi_j}{\|\xi\|}.
\] 
Repeating this argument for higher derivatives we arrive at
\[
D^{\beta}\left(\|\xi\|^{-2k}\right)=\|\xi\|^{-2k-|\beta|}\Omega(\xi)
\]
where $\beta$ is a multi-index such that $|\beta|\leq n+1$, and $\Omega(\lambda\xi)=\Omega(\xi)$.    
This information allows us to check (H5) easily.  We move on to (R1) and (R2).  Mimicking the one dimension case we see that 
\[
M_{j,k}(\xi)=\left( \dfrac{\|\xi\|}{\|\xi+2\pi j   \|}  \right)^{2k}.  
\]
If $\xi\in(-\pi,\pi)^n$ and $j\neq0$, then we have $\|\xi\|<\|\xi+2\pi j\|$ hence these terms go to 0.  Thus we have shown that (R1) holds.  For (R2) we must be a little more careful.  In this case, we see that the terms grow like $\|j\|^{-2k}$.  Thus we may find a constant and C such that $M_{j,k}(\xi)\leq C \| j\|^{-2k}$ as long as $j\neq0$.  Recalling that $2k\geq n+1$ we have that
\[
M_{j,k}(\xi)\leq C\|j\|^{-n-1}.
\]  
 
\subsection{Gaussians}   
Our second example is the Gaussian.  This example was studied in \cite{RS 1, RS 2}, although we change the notation somewhat.  We begin with $\phi_{\alpha}(x)=e^{-x^2/(4\alpha)}$, where $x\in\mathbb{R}$, thus omitting constants we have $\hat{{\phi}}_\alpha(\xi)= e^{-\alpha\xi^2}$.  We take $\alpha\geq1$ as a convenience, the arguments go through with $\alpha\geq\alpha_0>0$.  Again, checking conditions (H1)-(H5), is a straightforward calculus exercise, whose details are omitted.  We check (R1) and (R2).  For $|\xi|<\pi$ and $j\neq0$, we have
\[
M_{j,\alpha}(\xi)= e^{\alpha\xi^2-\alpha(\xi+2\pi j)^2}.
\]
We can see that these terms go to 0 since the exponent is negative.  Since this is true for all $j\neq0$, (R1) holds. As for (R2) we note that if $\alpha\geq1$ we have
\[
e^{\alpha\pi^2-\alpha(\xi+2\pi j)^2}=e^{-\alpha\pi^2(4|j|)(|j|-1)}\leq e^{-4\pi^2|j|(|j|-1)}.
\]
Hence $\left\{e^{-x^2/(4\alpha)}\right\}_{\alpha\geq1}$ is a regular family of cardinal interpolators in the univariate case.  In the multivariate case, we have $\hat{{\phi}}_\alpha(\xi)= e^{-\alpha\|\xi\|^2}$, and again consider $\alpha\in[1,\infty)$  This function is smooth so all of the relevant properties are easily checked.  (D1) and (D2) are checked as above by replacing the absolute values with norms.  Hence, $\left\{e^{-x^2/(4\alpha)}\right\}_{\alpha\geq1}$ is a regular family of cardinal interpolators in the multivariate case as well.

\subsection{Multiquadrics}

Our next example is a family of `multiquadrics', meaning $\phi(x)=(c^2+\|x\|^2)^\alpha$.  In this example we have a choice between which parameter to limit over and which to keep fixed.  We will work both cases.  In the univariate case, allowing $c$ to vary is the subject of \cite{RS}.  We begin by finding the specialized Fourier transform of $\phi(x)=(c^2+\|x\|^2)^{\alpha}$.  We have
\[
\hat{\phi}(\xi)=\dfrac{2^{1+\alpha}}{\Gamma(-\alpha)}\left( \dfrac{\|\xi\|}{c}  \right)^{-\alpha-\frac{n}{2}}K_{\frac{n}{2}+\alpha}(c\|\xi\|), \hspace{.5in}\xi\neq0,
\]
where
\[
K_\beta(r) = \dfrac{1}{2}\int_{\mathbb{R}}e^{-r\cosh t}e^{\beta t}dt,
\]
for $r>0$ and $\beta\in\mathbb{R}$.  The details of this may be found in Theorem 8.15 of \cite{wendland}.  We must omit the case when $\alpha\in\{0,1,2,\dots\}$ because in this case we would get a measure in the transform space and we would like a function.  This is why we use the specialized Fourier transform and not the more standard generalization. 
We first consider the univariate case and list some properties of $K_\beta(r)$ which will be useful is our analysis. These properties may be found on page 52 of \cite{wendland}.
The first is that $K_\beta(r)$ is decreasing on $(0,\infty)$.  We find the following asymptotic expansions in \cite{AS}:
\begin{align}
K_\beta(r)&\sim \dfrac{1}{\sqrt{r}}e^{-r}e^{|\beta|^2/(2r)}\hspace{.25in}&(r\to\infty),\\
\label{0bnd}K_\beta(r)&\sim r^{-|\beta|}&(r\to0).
\end{align}
Here $f\sim g$ means that $f=(C+o(1))g$, where $C$ is a constant independent of the limiting parameter.  We also have the differentiation formula
\begin{equation}\label{Kderiv}
\dfrac{d}{dz}\big[ z^\beta K_{\beta}(z)   \big] = -z^{\beta}K_{\beta -1}(z).
\end{equation}
And because we will need it later we have the following bounds for $|\beta|\geq\frac{1}{2}$ and $r>0$
\begin{equation}\label{doublebnd}
\sqrt{\dfrac{\pi}{2}}r^{-1/2}e^{-r}\leq K_\beta(r)\leq \sqrt{2\pi}r^{-1/2}e^{-r}e^{-\beta^2/(2r)}
\end{equation}
All of this information may be found in \cite{AS} and Chapter 5 of \cite{wendland}, and has been restated here for convenience and to aide in the calculations that follow.  We will consider two different families of multiquadrics.  In the first example we fix a positive number $c$ and take $\{\alpha_j\}_{j=1}^{\infty}\subset[\frac{1}{2},\infty)$, such that dist$\left(\{\alpha_j\},\{1,2,3,\dots\}\right)>0$ and $\displaystyle\lim_{j\to\infty}\alpha_j=\infty$.  We show $\{\phi_{\alpha_j}(x)\}_{j=1}^{\infty}$ is a regular family of cardinal interpolators.  We begin in the univariate case checking (H1)-(H5).  (H1)-(H4) follow from the definition, the differentiation formula, and the asymptotic estimates.  (H5) also follows from the asymptotic estimates, but we will show these calculations.  First, we find the derivatives in question omitting the associated constants since they have no bearing on the overall result.
\begin{align*}
\hat{\phi}^{(1)}_{\alpha_j}(\xi)=&-|\xi|^{-\alpha_j-\frac{1}{2}}K_{\alpha_j+\frac{3}{2}}(c|\xi|)\\
\hat{\phi}^{(2)}_{\alpha_j}(\xi)=&|\xi|^{-\alpha_j-\frac{1}{2}}K_{\alpha_j-\frac{5}{2}}(c|\xi|)+|\xi|^{-\alpha_j-\frac{3}{2}}K_{\alpha_j+\frac{3}{2}}(c|\xi|)
\end{align*}
Using the asymptotic expansion, we find the following limits.
\begin{align*}
\dfrac{1}{\hat{\phi}_{\alpha_j}(\xi)}&=\dfrac{|\xi|^{\alpha_j+\frac{1}{2}}}{K_{\alpha_j+\frac{1}{2}}}=\dfrac{|\xi|^{2\alpha_j+1}}{C+o(1)} = 0  &\hspace{.25in}(|\xi|\to0)\\
\dfrac{\hat{\phi}^{(1)}_{\alpha_j}(\xi)}{\hat{\phi}^{2}_{\alpha_j}(\xi)}& = |\xi|^{\alpha_j+\frac{1}{2}}\dfrac{K_{\alpha_j+\frac{3}{2}}(c|\xi|)}{K_{\alpha_j+\frac{1}{2}}^{2}(c|\xi|)}=|\xi|^{2\alpha_j}\dfrac{C_2+o(1)}{[C_1+o(1)]^2}=0 &(|\xi|\to0)\\
\dfrac{\hat{\phi}^{(2)}_{\alpha_j}(\xi)}{\hat{\phi}^{2}_{\alpha_j}(\xi)}&=|\xi|^{2\alpha_j-1}\left\{\dfrac{C_3+o(1)}{[C_1+o(1)]^2} -\dfrac{C_2+o(1)}{[C_1+o(1)]^2}   \right\} &(|\xi|\to0)
\end{align*}
We can see that in the last expression we must take $\alpha_j\geq\dfrac{1}{2}$ in order for the limit to be bounded.  Now we turn to (R1).  Let $|\xi|<\pi$ and $i\neq0$, then
\[
M_{i,\alpha_j}(\xi)\leq\dfrac{\pi^{\alpha_j+\frac{1}{2}}}{K_{\alpha_{j}+\frac{1}{2}}(c\pi)}\dfrac{K_{\alpha_j+\frac{1}{2}}(c|\xi+2\pi i|)}{|\xi+2\pi i|^{\alpha_j+\frac{1}{2}}}\leq \dfrac{1}{(2|i|-1)^{\alpha_j+\frac{1}{2}}}.
\]
We have used the fact that $K_\beta(r)$ is decreasing.  This estimate allows us to prove (D2) as well, since the condition for every $\alpha$ may be replaced with for every $\alpha$ large enough.  In this case, large enough means $\alpha_j>\frac{1}{2}$.  Thus we have that $\left\{(c^2+|x|^2)^{\alpha_j}\right\}$ is a well behaved family for fixed $c>0$ and$\{\alpha_j\}_{j=1}^{\infty}\subset[\frac{1}{2},\infty)$, such that dist$\left(\{\alpha_j\},\{1,2,3,\dots\}\right)>0$ and $\displaystyle\lim_{j\to\infty}\alpha_j=\infty$.  Before moving on, we make note of the specific example $\alpha_k=\frac{2k-1}{2}$ for $k\in\{1,2,3,\dots\}$.  This is the family of odd powers of the multiquadric $\{\left(\sqrt{c^2+|x|^2}\right)^{2k-1}\}$.  This family corresponds to `smoothed out' polyharmonic splines of order $2k$.  In light of the results in \cite{splines}, one may well expect that our family of multiquadrics shares a similar convergence property.  This is indeed the case as our work above shows.

We now explore what happens if we fix $\alpha$ and allow $c$ to vary.  Again we will need to bound the parameter away from the origin so we take $c\geq1$.  Let $\phi_c(x)=(c^2+|x|^2)^{\alpha}$ for some fixed $\alpha\in(-\infty,-\frac{3}{2}]\cup[\frac{1}{2},\infty)\setminus\{1,2,3,\dots\}$, then, omitting constants because they will divide out in the fundamental function, we have $\hat{\phi}_{c}(\xi)=|\xi|^{-\alpha-\frac{1}{2}}K_{\frac{1}{2}+\alpha}(c|\xi|)$.  The bounds on $\alpha$ are a result of the restrictions imposed by the derivatives.  Since $\hat{\phi}_c(0)$ is a real positive number if $\alpha<-\frac{1}{2}$, (H5) places restrictions on the growth of $\hat{\phi}_c(\xi)$ near the origin.  To satisfy this condition, we need to increase the rate of decay of $\phi_c(x)$. Most of the work to check that this is a regular family of cardinal interpolators has already been done.  The same arguments used above can be recycled, however we must check (R1) and (R2) separately.  We check (R1) presently.
\begin{align*}
M_{j,c}(\xi)&\leq\dfrac{\pi^{\alpha+\frac{1}{2}}}{|\xi+2\pi j|^{\alpha+\frac{1}{2}}}\dfrac{K_{\alpha+\frac{1}{2}}(c|\xi+2\pi j|)}{K_{\alpha+\frac{1}{2}}(c\pi)} \\
& \leq \left( 2|j|-1 \right)^{-(\alpha+\frac{1}{2})}\dfrac{K_{\alpha+\frac{1}{2}}(c|\xi+2\pi j|)}{K_{\alpha+\frac{1}{2}}(c\pi)}   \\
&\leq  \left( 2|j|-1 \right)^{-(\alpha+1)}e^{c\pi}e^{-c|\xi-2\pi j|}e^{-(\alpha+\frac{1}{2})^2/(2c|\xi+2\pi j|)}\\
&\leq   \left( 2|j|-1 \right)^{-(\alpha+1)}e^{c\pi}e^{-c|\xi-2\pi j|} \\
\end{align*}
We can see, for $j\neq0$ and $|\xi|<\pi$, that $\displaystyle\lim_{c\to\infty}M_{j,c}(\xi)=0$.  This calculation also shows that (R2) is satisfied since each term there is bounded by $(2|j|-1)^{-(\alpha+1)}e^{-2\pi (|j|-1)}$.  Thus we have shown that $\left\{(c^2+|x|^2)^\alpha\right\}_{c\geq1}$ is a well behaved family of interpolators for a fixed $\alpha\in(-\infty,-\frac{3}{2}]\cup[\frac{1}{2},\infty)\setminus\{1,2,3,\dots\}$.  The case where $\alpha=\frac{1}{2}$ is examined in \cite{RS}.  

We now visit the $n$-dimensional multiquadric $\phi(x)=(c^2+\|x\|^2)^\alpha$.  We will again consider both the case when $\alpha$ is the parameter and $c$ is fixed and the case when $c$ is the parameter and $\alpha$ is fixed.  We begin by recalling the specialized $n$-dimensional Fourier transform of the multiquadric,
\[
\hat{\phi}(\xi)=\dfrac{2^{1+\alpha}}{\Gamma(-\alpha)}\left( \dfrac{\|\xi\|}{c}  \right)^{-\alpha-\frac{n}{2}}K_{\frac{n}{2}+\alpha}(c\|\xi\|), \hspace{.5in}\xi\neq0.
\]  
Conditions (H1)-(H4) follow as before.  We need to calculate the derivatives in order to check (H5).  To this end, we may use the differentiation formula \eqref{Kderiv} together with repeated use of the product rule and the asymptotic estimate \eqref{0bnd} to see that near the origin we have the following asymptotic estimate
\begin{equation}\label{Kest}
D^{\beta}\hat\phi(\xi) \sim \|\xi\|^{-\alpha-\frac{n}{2}-|\alpha+\frac{n}{2}+|\beta||}, \hspace{.25in}\|\xi\|\to0.
\end{equation}
Recall that $f\sim g$ means that $f=g(C+o(1))$ for an appropriate constant C.  This calculation holds no matter which parameter we are considering fixed.  Thus we can begin by fixing $c>0$ and checking (H5), (R1) and (R2) for $\phi_\alpha(x)$.  For (H5) we note that $\alpha>0$, hence the asymptotic estimate \eqref{Kest} becomes
\[
D^{\beta}\hat\phi(\xi) \sim \|\xi\|^{-2\alpha-n-|\beta|}, \hspace{.25in}\|\xi\|\to0.
\]
Using this estimate, checking (H5) is now a routine matter.  We note that we must have $\alpha\geq\frac{1}{2}$.  To see (R1), note that
\[
M_{j,\alpha}(\xi)=\left( \dfrac{\|\xi\|}{\|\xi+2\pi j  \|}  \right)^{-\alpha-\frac{n}{2}}\dfrac{K_{\alpha+\frac{n}{2}}(c\|\xi+2\pi j\|)}{K_{\alpha+\frac{n}{2}}(c\|\xi\|)},
\]
and if $\xi\in(-\pi,\pi)^n$ we have
\[
\lim_{\alpha\to\infty}M_{j,\alpha}(\xi)\leq \lim_{\alpha\to\infty}\left( \dfrac{\|\xi\|}{\|\xi+2\pi j  \|}  \right)^{-\alpha-\frac{n}{2}}.
\]
These terms tend to 0 as $\alpha$ increases.  As in the one variable case, we must be careful about how we choose the parameter $\alpha$, in particular we take $\alpha\in\{\alpha_j\}\subset [\frac{1}{2},\infty)$ such that dist$(\{\alpha_j\},\{0,1,2,3,\dots\})>0$ and $\displaystyle\lim_{j\to\infty}\alpha_j=\infty$.  For the bounding sequence in (R2) we take $\alpha=\frac{n+1}{2}$.  These terms grow like $\|j\|^{-n-1}$, hence we may find a constant C such that
\[
M_{j,\alpha}(\xi)\leq C\|j\|^{-n-1}.
\]
This bound holds for all $\alpha_j \geq \frac{n+1}{2}$, hence (R2) holds.

We now will fix $\alpha$ and let $c\to\infty$.  That is, we consider $\phi_c(x)=(c^2+\|x\|^2)^\alpha$.  (H1)-(H4) hold as before.  We take advantage of \eqref{Kest} to find that if $\alpha\in(-\infty,-\frac{2n+1}{2}]\cup[\frac{1}{2},\infty)\setminus\{1,2,3,\dots\}$ then (H5) holds.  As for (R1) and (R2), we must be a bit more careful.  We have
\[
M_{j,c}(\xi)=\left( \dfrac{\|\xi\|}{\|\xi+2\pi j  \|}  \right)^{-\alpha-\frac{n}{2}}\dfrac{K_{\alpha+\frac{n}{2}}(c\|\xi+2\pi j\|)}{K_{\alpha+\frac{n}{2}}(c\|\xi\|)},
\]  
and we can see from \eqref{doublebnd} that
\[
M_{j,c}(\xi)\leq C\left( \dfrac{\|\xi\|}{\|\xi+2\pi j  \|}  \right)^{-\alpha-\frac{n-1}{2}}e^{c(\|\xi\|-\|\xi+2\pi j\|)}.
\]
Here $C$ is a positive constant independent of $c$ whose particular value is unimportant.  If $\xi\in(-\pi,\pi)^n$, then $M_{j,c}(\xi)\to 0$ because the exponent in the exponential is negative.  To check (R2), note that these terms decay exponentially hence we can find a bounding sequence which is in $l^1(\mathbb{Z}^n\setminus\{0\})$.

\end{document}